\newtheorem{theorem}{Theorem}
\newtheorem{lemma}[theorem]{Lemma}
\newtheorem{proposition}[theorem]{Proposition}
\newtheorem{conjecture}[theorem]{Conjecture}
\newtheorem{claim}{Claim}
\begin{document}
\onehalfspace

\title{Efficiently finding low-sum copies of spanning forests in\\ 
zero-sum complete graphs via conditional expectation}
\author{Johannes Pardey \and Dieter Rautenbach}
\date{}

\maketitle
\vspace{-10mm}
\begin{center}
{\small Institute of Optimization and Operations Research, Ulm University,\\ 
Ulm, Germany, \texttt{$\{$johannes.pardey,dieter.rautenbach$\}$@uni-ulm.de}}
\end{center}

\begin{abstract}
For a fixed positive $\epsilon$,
we show the existence of a constant $C_\epsilon$ with the following property:
Given a $\pm1$-edge-labeling $c:E(K_n)\to \{ -1,1\}$ 
of the complete graph $K_n$ with $c(E(K_n))=0$,
and a spanning forest $F$ of $K_n$ of maximum degree $\Delta$,
one can determine in polynomial time 
an isomorphic copy $F'$ of $F$ in $K_n$ with 
$|c(E(F'))|\leq \left(\frac{3}{4}+\epsilon\right)\Delta+C_\epsilon.$
Our approach is based on the method of conditional expectation.\\[3mm]
{\bf Keywords:} Zero-sum subgraph; zero-sum Ramsey theory; method of conditional expectation
\end{abstract}

\section{Introduction}

The kind of zero-sum problem that we study here 
can be traced back to algebraic results 
such as the well-known Erd\H{o}s-Ginzburg-Ziv theorem~\cite{ergizi}.
The two survey articles due to Caro~\cite{ca} as well as Gao and Geroldinger~\cite{gage} 
give a detailed account of this area also known as zero-sum Ramsey theory 
within discrete mathematics and additive group theory.

Several recent papers~\cite{cahalaza,cayu,ehmora,kisi,mopara} study 
(almost) zero-sum spanning forests in edge-labeled complete graphs,
and, in the present paper, 
we contribute an algorithmic approach for finding low-sum spanning forests.
The setting involves a complete graph $K_n$ of order $n$ 
together with a {\it zero-sum $\pm 1$-labeling} of its edges, 
that is, a function $c:E(K_n)\to \{ -1,1\}$ that satisfies
$$c(E(K_n))=\sum\limits_{e\in E(K_n)}c(e)=0.$$
For a given spanning forest $F$ of $K_n$, 
we consider the algorithmic task to efficiently find an isomorphic copy $F'$ of $F$ in $K_n$ 
that minimizes $|c(E(F'))|$.
The corresponding existence version of this algorithmic task,
and, in particular, the question under which conditions 
there is a zero-sum copy of $F$ in $K_n$ 
was studied in~\cite{cahalaza,cayu,ehmora,kisi,mopara},
where the arguments are typically non-algorithmic 
or do not lead to efficient algorithms.

The following two simple observations correspond 
to key existential arguments in this area:
\begin{itemize}
\item If $F$ is a spanning forest of $K_n$ and $c$ is a zero-sum $\pm 1$-labeling of the edges of $K_n$,
then, by symmetry, every edge of $K_n$ belongs to the same number of isomorphic copies of $F$ in $K_n$,
which implies that the average of $c(E(F'))$, 
where $F'$ ranges over all isomorphic copies of $F$ in $K_n$, equals $0$.
In particular, there are copies $F^+$ and $F^-$ of $F$ with $c(E(F^+))\geq 0$ and $c(E(F^-))\leq 0$.
\item If $F_1,\ldots,F_k$ are isomorphic copies of $F$ in $K_n$,
$c(E(F_1))\geq 0$, 
$c(E(F_k))\leq 0$, and
each $F_{i+1}$ arises from $F_i$ by removing at most $\ell$ edges and adding at most $\ell$ edges,
then  
$|c(E(F_i))|\leq \ell$
for some $i$.
\end{itemize}
As observed in \cite{mopara}, these observations yield the following.

\begin{proposition}[Mohr et al.~\cite{mopara}]\label{proposition1}
If $c:E(K_n)\to \{ -1,1\}$ is a zero-sum labeling of $K_n$,
and $F$ is a spanning forest of $K_n$ of maximum degree $\Delta$,
then there is an isomorphic copy $F'$ of $F$ in $K_n$ with 
$|c(E(F'))|\leq \Delta+1$.
\end{proposition}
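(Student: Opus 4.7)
The plan is to combine the two observations stated just above the proposition. The first observation furnishes copies $F^+$ and $F^-$ of $F$ in $K_n$ with $c(E(F^+))\geq 0$ and $c(E(F^-))\leq 0$. I will then construct a sequence $F_1=F^+,F_2,\ldots,F_k=F^-$ of isomorphic copies of $F$ in $K_n$ such that each $F_{i+1}$ arises from $F_i$ by removing at most $\Delta+1$ edges and adding at most $\Delta+1$ edges. The second observation, applied with $\ell=\Delta+1$, will then yield some $i$ with $|c(E(F_i))|\leq\Delta+1$, and the proof will be complete. If $F$ has no edges the claim is trivial, so I may assume $F$ has at least one leaf $\ell$.

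I view each copy of $F$ in $K_n$ as $\pi(F)$ for a bijection $\pi:V(F)\to V(K_n)$, and I take as the elementary modification the replacement of $\pi$ by $\pi\circ(\ell\, v)$ for some $v\in V(F)\setminus\{\ell\}$; call this a \emph{leaf swap} at $v$. The edges in which $\pi(F)$ and $(\pi\circ(\ell\, v))(F)$ differ are exactly the images of the edges of $F$ meeting $\{\ell,v\}$ in exactly one vertex, and there are at most $\deg_F(\ell)+\deg_F(v)\leq 1+\Delta$ such edges. Hence a single leaf swap removes at most $\Delta+1$ edges and adds at most $\Delta+1$ edges, as required.

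To connect $F^+$ to $F^-$ by leaf swaps, write $F^+=\pi^+(F)$ and $F^-=\pi^-(F)$ and factor $(\pi^+)^{-1}\circ\pi^-$ into transpositions of the form $(\ell\, v)$; this is possible because the star transpositions $\{(\ell\, v):v\in V(F)\setminus\{\ell\}\}$ generate the symmetric group on $V(F)$, via the identity $(a\, b)=(\ell\, a)(\ell\, b)(\ell\, a)$ for $a,b\neq\ell$. Applying these transpositions one at a time to $\pi^+$ produces the desired sequence of copies, and the second observation closes the argument. I do not foresee a real obstacle: the only point that requires care is the choice of atomic operation, namely forcing one endpoint of each swap to be a leaf of $F$, which is precisely what brings the per-step bound down from the naive $2\Delta$ to $\Delta+1$.
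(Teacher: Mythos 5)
Your proof is correct and follows essentially the same route as the paper: combine the two observations, linking $F^+$ to $F^-$ by transpositions that always involve a vertex of degree at most $1$ so that each step changes at most $\Delta+1$ edges. Your fixing of a single leaf $\ell$ and use of star transpositions is just a concrete instantiation of the paper's "transpositions always involving at least one vertex of degree at most $1$".
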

Believing that the bound in Proposition \ref{proposition1} is not best-possible,
Mohr et al.~\cite{mopara} posed the following.
\begin{conjecture}[Mohr et al.~\cite{mopara}]\label{conjecture2}
If $c:E(K_n)\to \{ -1,1\}$ is a zero-sum labeling of $K_n$,
and $F$ is a spanning forest of $K_n$ of maximum degree $\Delta$,
then there is an isomorphic copy $F'$ of $F$ in $K_n$ with 
$|c(E(F'))|\leq \frac{1}{2}\Delta-\frac{1}{2}$.
\end{conjecture}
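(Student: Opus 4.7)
The plan is to apply the method of conditional expectation to a uniformly random embedding of $F$ into $K_n$. Let $\pi:V(F)\to V(K_n)$ be a uniform random bijection and set $X=c(E(F^\pi))$, where $F^\pi$ denotes the corresponding isomorphic copy. The symmetry argument underlying the first of the two observations quoted above gives $\mathbb{E}[X]=0$. Fix an ordering $v_1,\ldots,v_n$ of $V(F)$, write $Y_j=\mathbb{E}\bigl[X\mid\pi(v_1),\ldots,\pi(v_j)\bigr]$, so $Y_0=0$ and $Y_n=X$, and greedily choose $\pi(v_{j+1})$ among the unused vertices of $V(K_n)$ so as to keep $|Y_{j+1}|$ as small as possible. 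The key local quantity is the \emph{back-degree} $b_{j+1}$ of $v_{j+1}$, meaning the number of $F$-neighbours of $v_{j+1}$ among $v_1,\ldots,v_j$, because only the corresponding $b_{j+1}$ edges of $F^\pi$ become completely determined when $\pi(v_{j+1})$ is revealed.

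First I would choose the processing order as a rooted BFS of each tree component of $F$, arranged so that $b_1=0$ and $b_j\in\{1,2,\ldots,\Delta-1\}$ for $j\ge 2$, with $b_j=1$ as often as possible, since $\sum_j b_j=|E(F)|\le n-1$. Next I would analyse a single derandomization step: by the tower property, the average of $Y_{j+1}$ over the admissible choices of $\pi(v_{j+1})$ equals $Y_j$, so some choice yields a $Y_{j+1}$ of opposite sign (or zero) unless the $b_{j+1}$ newly completed edges are extremely unbalanced. In the favourable case $|Y_{j+1}|$ can be forced to drop or to be bounded by a quantity that depends only on the $\pm 1$-imbalance among the remaining unused vertex pairs, and one then tries to amortise these per-step bounds globally using the zero-sum condition $c(E(K_n))=0$ together with the degree bound $\Delta$.

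The hard part will be controlling the last few derandomization steps, where very few vertices remain unfixed and the symmetry exploited to keep $|Y_j|$ small is essentially exhausted: in the worst case the final contributions come from a single high-degree vertex whose image in $K_n$ must absorb up to $\Delta$ already-revealed $\pm 1$-labels, and this is presumably the point at which the paper's weaker bound $\left(\frac{3}{4}+\epsilon\right)\Delta+C_\epsilon$ originates. Closing the gap to the conjectured $\frac{1}{2}\Delta-\frac{1}{2}$ would, I expect, demand either a finer multi-step amortisation that couples several consecutive derandomization steps, or an entirely non-algorithmic swap argument producing a short path of isomorphic copies from some $F^+$ with $c(E(F^+))\ge 0$ to some $F^-$ with $c(E(F^-))\le 0$ in which each step alters at most $\lfloor(\Delta-1)/2\rfloor$ edges; the existence of such a path would yield the conjecture immediately via the second observation, but is itself the principal unresolved combinatorial obstacle.
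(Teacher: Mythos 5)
There is a fundamental issue to flag first: the statement you are asked to prove is Conjecture~\ref{conjecture2}, which is an \emph{open conjecture} (due to Mohr et al.) that this paper explicitly does not prove. The paper's main result, Theorem~\ref{theorem1}, obtains only the weaker bound $\left(\frac{3}{4}+\epsilon\right)\Delta+C_\epsilon$ via exactly the strategy you outline, and the conclusion of the paper states that improving this towards $\left(\frac{1}{2}+\epsilon\right)\Delta+C_\epsilon$ would already require strengthening the analysis (in particular Claim~\ref{claim4}). Your proposal is therefore a plan rather than a proof, and you say so yourself: the final paragraph concedes that the decisive step --- either a multi-step amortisation or a swap sequence changing at most $\lfloor(\Delta-1)/2\rfloor$ edges per step --- is ``the principal unresolved combinatorial obstacle.'' That concession is exactly where the proof is missing, so there is nothing to verify.

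To be concrete about why the single-step greedy analysis cannot reach $\frac{1}{2}\Delta-\frac{1}{2}$: when the vertex $v_{j+1}$ being embedded still has $d_2\le\Delta$ neighbours among the not-yet-embedded vertices, revealing $\pi(v_{j+1})$ shifts the conditional expectation by roughly $\left(|\bar{c}'_k(p)|+|\bar{c}'_k|\right)d_2$, where $\bar{c}'_k(p)$ is the average label on the edges from the chosen image $p$ into the unembedded part. The best one can guarantee for $|\bar{c}'_k(p)|$ over the available choices of $p$ is about $\frac{1}{2}$ (this is the content of the paper's Claim~\ref{claim4}, and the bound $\frac{1}{2}$ there is essentially tight for that claim), so a single step can be forced to move the conditional expectation by about $\frac{1}{2}\Delta$; combined with the fact that when the expectation is already large one can only guarantee a step of size about $\Delta$ back towards zero, the invariant one can maintain is $\frac{3}{4}\Delta+O(\epsilon\Delta)$, not $\frac{1}{2}\Delta$. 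Two further inaccuracies in your sketch: (i) the tower property gives a choice with $Y_{j+1}\le Y_j$ and a choice with $Y_{j+1}\ge Y_j$, but none of the choices need have sign opposite to $Y_j$, so ``some choice yields a $Y_{j+1}$ of opposite sign (or zero)'' is false in general; (ii) a transposition exchanging the roles of two vertices of $F$ generically alters up to $2\Delta$ edges (and the paper's local-search discussion only bounds the change in $c$ by $4\Delta$), so a swap path with steps of size $\lfloor(\Delta-1)/2\rfloor$ cannot simply be asserted to exist. As it stands, the proposal establishes at best the paper's Theorem~\ref{theorem1}-type bound and leaves the conjecture open.
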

In~\cite{mopara} this conjecture was verified for the spanning star $K_{1,n-1}$ of $K_n$.
For other special spanning forests,
in particular, perfect matchings,
and under natural divisibility conditions,
the existence of zero-sum copies was shown in~\cite{cahalaza,ehmora,kisi,mopara}.
Our main contribution here is the following theorem, 
which improves Proposition \ref{proposition1} in two ways:
It strengthens the bound given there almost halfway towards the bound from Conjecture \ref{conjecture2},
and it provides the existence of an efficient algorithm to find the desired low-sum copy.

\begin{theorem}\label{theorem1}
Let $\epsilon>0$ be fixed.
There is a constant $C_\epsilon$ such that the following holds:
Given a zero-sum labeling $c:E(K_n)\to \{ -1,1\}$ of $K_n$
and a spanning forest $F$ of $K_n$ of maximum degree $\Delta$,
one can determine in polynomial time 
an isomorphic copy $F'$ of $F$ in $K_n$ with 
$$|c(E(F'))|\leq \left(\frac{3}{4}+\epsilon\right)\Delta+C_\epsilon.$$
\end{theorem}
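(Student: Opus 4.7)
The plan is to apply the method of conditional expectation to a uniformly random embedding of $F$ into $K_n$. Let $\pi:V(F)\to V(K_n)$ be a uniform random bijection, $F^{\pi}$ the resulting copy, and set $X:=c(E(F^{\pi}))=\sum_{uv\in E(F)}c(\pi(u)\pi(v))$; by the first observation in the introduction, $E[X]=0$. I would process $V(F)=\{v_1,\ldots,v_n\}$ in a fixed order---say, BFS starting from a vertex of maximum degree---and at step $i+1$ pick $\pi(v_{i+1})$ from the unused images in $V(K_n)$ to greedily minimize a pessimistic estimator $\Phi_i$ for $|X|$. A natural candidate is the conditional second moment $\Phi_i:=E[X^2\mid\pi(v_1),\ldots,\pi(v_i)]$, which is a martingale under the uniform extension of $\pi$; greedy minimization keeps $\Phi_i$ non-increasing and yields $X^2=\Phi_n\le\Phi_0=\mathrm{Var}(X)$ at termination, so $|X|\le\sqrt{\mathrm{Var}(X)}$. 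Since $\Phi_i$ is a sum over $O(|E(F)|^2)$ edge pairs, it is maintainable in polynomial time.

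For $\Delta$ below an $\epsilon$-dependent threshold $K_\epsilon$, I would replace this by an algorithmic version of Proposition~\ref{proposition1}, whose output $|c(E(F'))|\le\Delta+1\le K_\epsilon+1$ can be absorbed into $C_\epsilon$.

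\textbf{Main obstacle.} The central task is the variance bound $\mathrm{Var}(X)\le((3/4+\epsilon)\Delta+C_\epsilon)^2$ for $\Delta\ge K_\epsilon$. Expanding $E[X^2]=\sum_{e,e'\in E(F)}E[c(\pi(e))c(\pi(e'))]$ and classifying pairs by $|e\cap e'|\in\{0,1,2\}$, then using $c(E(K_n))=0$, one expresses each contribution in terms of $|E(F)|$, $\sum_v\binom{\deg_F(v)}{2}$, and the local vertex sums $d(w):=\sum_{u\ne w}c(wu)$. The diagonal contributes $|E(F)|$, of order $n$, which becomes $O(\Delta^2)$ only once $\Delta\gtrsim\sqrt{n}$; adjacent-edge cross terms yield a negative contribution of order $-\sum_v\binom{\deg_F(v)}{2}/n$ capable of cancelling most of the diagonal when $F$-degrees concentrate. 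Quantifying this cancellation uniformly in $(F,c)$---especially in the intermediate regime $K_\epsilon\lesssim\Delta\lesssim\sqrt{n}$, where the bare variance bound still exceeds $(3/4+\epsilon)\Delta+C_\epsilon$---is the main difficulty. This likely requires an auxiliary ingredient, for instance conditioning on placing a maximum-degree vertex at a $c$-balanced image $z$ with $|d(z)|\le 1$ (which exists since $\sum_w d(w)=0$), or supplementing conditional expectation with an edge-swap refinement in the spirit of the second observation in the introduction. The factor $3/4$ (rather than the conjectured $1/2$) appears to be the sharpness limit of this second-moment approach.
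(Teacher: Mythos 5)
Your proposal correctly identifies the method of conditional expectation as the engine, but the pessimistic estimator you choose --- the conditional second moment --- cannot deliver the stated bound, and you have not closed the gap that you yourself flag as the ``main obstacle.'' The diagonal of $\mathbb{E}[X^2]$ already contributes $m(F)=\Theta(n)$ for a spanning forest, so $\sqrt{\mathrm{Var}(X)}$ is generically $\Theta(\sqrt{n})$ unless the off-diagonal terms cancel it almost exactly. For a perfect matching ($\Delta=1$) there are no adjacent edge pairs at all, so the cancellation mechanism you invoke (the term of order $-\sum_v\binom{\deg_F(v)}{2}/n$) vanishes identically; a direct computation for a labeling with all vertex sums $\sum_{u}c(wu)$ near $0$ gives $\mathrm{Var}(X)=\Theta(n)$, whence your bound $|X|\le\sqrt{\mathrm{Var}(X)}$ is $\Theta(\sqrt n)$, vastly weaker than $(\frac34+\epsilon)\cdot 1+C_\epsilon$. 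The same failure persists throughout the regime $\Delta=o(\sqrt n)$, which your fallback to Proposition~\ref{proposition1} (only for $\Delta\le K_\epsilon$) does not cover. So the missing variance estimate is not merely technical: it is false as a route to the theorem.

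The paper's proof instead greedily minimizes the conditional \emph{first} moment $|\mathbb{E}[c(F_\pi)\mid \pi(1),\ldots,\pi(k)]|$ and bounds the per-step drift of this quantity directly. Two ingredients make this work and have no counterpart in your sketch. First, the vertices of $F$ are pre-ordered so that each of the first $\epsilon n$ has at most one already-embedded neighbour (using $1$-degeneracy) while every later one has degree at most $2/\epsilon$; this caps the change of the conditional expectation at each step by roughly $(1+O(\epsilon))\Delta+O(1/\epsilon)$, and by $O(1/\epsilon)$ once $k>\epsilon n$, while equation (\ref{ee2}) guarantees a step that does not increase $|\mathbb{E}|$ when it is nonzero. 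Second, a structural lemma (Claim~\ref{claim4}) shows that in any graph on the $n-k$ unused vertices containing close to half of all possible edges --- here the positively labelled edges --- some vertex has degree between roughly $\frac14(n-k)$ and $\frac34(n-k)$; choosing that vertex as the next image makes the dominant drift term at most $(\frac12+O(\epsilon))\Delta$. Combining ``always move toward zero with step at most $\approx\Delta$'' with ``when $|\mathbb{E}|\le\frac14\Delta$ some step has size at most $\approx\frac12\Delta$'' yields the invariant $|\mathbb{E}|\le(\frac34+O(\epsilon))\Delta+O(1/\epsilon)$ by induction. To repair your argument you would need to replace the second moment by a per-step drift bound of this kind; the degree lemma is the piece for which your sketch has no substitute.
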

Our approach is based on the {\it method of conditional expectation} \cite{alsp,erse}.
In Section \ref{section2} we explain how to implement this method in the present context,
and illustrate it with an algorithmic version of Proposition \ref{proposition1}.
In Section \ref{section3} we consider a natural greedy algorithm based on the method of conditional expectation,
and provide the proof of Theorem \ref{theorem1} by analyzing this greedy algorithm.
In a conclusion we discuss further possible developments. 

\section{Embedding $T$ via conditional expectation}\label{section2}

Throughout this section, let $K$ be a complete graph of order $n$,
let $c:E(K)\to \{ -1,1\}$ be a zero-sum labeling of $K$, and 
let $F$ be a spanning forest of $K$ of maximum degree $\Delta$.
Let $[n]$ be the set of positive integers at most $n$.
We may assume that $K$ has vertex set $[n]$.
For a subgraph $H$ of $K$, 
let $c(H)$ abbreviate $c(E(H))$, and
let $\bar{c}(H)$ equal $\frac{c(H)}{|E(H)|}$.
Similarly, for a set $E$ of edges of $K$, 
let $\bar{c}(E)$ equal $\frac{c(E)}{|E|}$.
As usual, for a set $X$ of vertices of a graph $G$, 
let $G[X]$ be the subgraph of $G$ induced by $X$,
and let $G-X=G[V(G)\setminus X]$.

For a permutation $\pi$ in $S_n$, 
let $F_\pi$ be the isomorphic copy of $F$ within $K$ 
with edge set $\{ \pi(u)\pi(v):uv\in E(F)\}$,
that is, within $F_\pi$, 
the vertex $\pi(u)$ of $K$ assumes the role of the vertex $u$ of $F$.
The first of the two observations mentioned before Proposition \ref{proposition1}
can be expressed as follows:
Choosing a permutation $\pi$ from $S_n$ uniformly at random,
and considering the random variable $c(F_\pi)$, 
linearity of expectation implies that
\begin{eqnarray}\label{ee-1}
\mathbb{E}\left[c(F_\pi)\right]=\frac{1}{n!}\sum\limits_{\pi\in S_n}c(F_\pi)=\bar{c}(K)m(F)=0.
\end{eqnarray}
For $k$ in $[n]$, and distinct elements $i_1,\ldots,i_k$ of $[n]$,
we consider the expected value of the random variable $c(F_\pi)$
subject to the condition that $\pi(j)=i_j$ for every $j$ in $[k]$, that is,
\begin{eqnarray}\label{ee0}
\mathbb{E}\left[c(F_\pi)\mid (\pi(1),\ldots,\pi(k))=(i_1,\ldots,i_k)\right]
=\frac{1}{(n-k)!}\sum\limits_{\pi\in S_n:(\pi(1),\ldots,\pi(k))=(i_1,\ldots,i_k)}c(F_\pi).
\end{eqnarray}
By the uniformity of the choice of the random permutation $\pi$, we have
\begin{eqnarray}\label{ee1}
\mathbb{E}\left[c(F_\pi)\right] &=& \frac{1}{n}\sum_{i_1=1}^n\mathbb{E}\left[c(F_\pi)\mid \pi(1)=i_1\right].
\end{eqnarray}
Similarly, if $k<n$, 
then the uniformity of the choice of $\pi$ implies that the conditional expectation
(\ref{ee0}) equals
\begin{eqnarray}
\frac{1}{n-k}\sum_{i_{k+1}\in [n]\setminus \{ i_1,\ldots,i_k\}}
\mathbb{E}\left[c(F_\pi)\mid (\pi(1),\ldots,\pi(k+1))=(i_1,\ldots,i_{k+1})\right].\label{ee2}
\end{eqnarray}
For the approach, 
it is crucial that 
the conditional expectation (\ref{ee0}) can be calculated efficiently.
In fact, linearity of expectation implies that (\ref{ee0}) equals
\begin{eqnarray}\label{ee3}
&&c\left(F_\pi\left[\{  i_1,\ldots,i_k\}\right]\right)\label{ee3}\\
&+&\sum\limits_{j=1}^k
\bar{c}\left(\{ i_j\ell:\ell\in [n]\setminus \{  i_1,\ldots,i_k\}\}\right)|N_F(j)\setminus [k]|\label{ee4}\\
&+&\bar{c}\left(K-\{  i_1,\ldots,i_k\}\right) m(F-[k]),\label{ee5}
\end{eqnarray}
where 
\begin{itemize}
\item (\ref{ee3}) is the weight of the edges $uv$ of $F[k]$ 
whose embedding $\pi(u)\pi(v)$ within $K$ is already completely determined 
by the condition $(\pi(1),\ldots,\pi(k))=(i_1,\ldots,i_k)$,
\item (\ref{ee4}) collects the expected weights of the $|N_F(j)\setminus [k]|$ edges
that each vertex $i_j$ of $F_\pi$,
taking the role of the vertex $j$ of $F$, 
sends into the set $[n]\setminus \{  i_1,\ldots,i_k\}$, and
\item (\ref{ee5}) is the expected weight of the edges of $F-[k]$
whose embedding within $K-\{  i_1,\ldots,i_k\}$ 
is still chosen uniformly at random.
\end{itemize}
In view of this representation, we obtain the following.

\begin{lemma}\label{lemma1}
Given $K$, $c$, $F$, $k$, and $i_1,\ldots,i_k$,
$$\mathbb{E}[c(F_\pi)\mid (\pi(1),\ldots,\pi(k))=(i_1,\ldots,i_k)]$$
can be computed in polynomial time.
\end{lemma}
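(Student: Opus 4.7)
The plan is to verify the identity
$$\mathbb{E}\bigl[c(F_\pi)\mid (\pi(1),\ldots,\pi(k))=(i_1,\ldots,i_k)\bigr]=(\ref{ee3})+(\ref{ee4})+(\ref{ee5})$$
indicated in the text above the lemma, and then to bound the number of arithmetic operations needed to evaluate the right-hand side on input $K$, $c$, $F$, $k$, $i_1,\ldots,i_k$.

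For the identity I would split $E(F)$ into three classes according to how many endpoints of the edge lie in $[k]$. Let $\mathcal{A}$ denote the conditioning event. For an edge $uv\in E(F)$ with $u,v\in[k]$, the image $\pi(u)\pi(v)=i_ui_v$ is forced by $\mathcal{A}$, so its contribution to the conditional expectation is $c(i_ui_v)$; summed over all such edges this gives (\ref{ee3}). For an edge $uv$ with exactly one endpoint, say $u=j\in[k]$, the value $\pi(v)$ is uniformly distributed over $[n]\setminus\{i_1,\ldots,i_k\}$ by exchangeability of the unfixed coordinates of $\pi$, so this edge contributes $\bar{c}\bigl(\{i_j\ell:\ell\in[n]\setminus\{i_1,\ldots,i_k\}\}\bigr)$ in expectation; summing over $j\in[k]$ and the $|N_F(j)\setminus[k]|$ edges incident with $j$ outside $[k]$ yields (\ref{ee4}). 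For an edge $uv$ with both endpoints outside $[k]$, the pair $(\pi(u),\pi(v))$ is uniform over ordered pairs of distinct elements of $[n]\setminus\{i_1,\ldots,i_k\}$, so the edge contributes $\bar{c}(K-\{i_1,\ldots,i_k\})$ in expectation, and multiplying by $m(F-[k])$ gives (\ref{ee5}).

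Next I would verify the running time. Term (\ref{ee3}) is the sum of $c(i_ui_v)$ over at most $\binom{k}{2}$ pairs $uv\in E(F)$ with both endpoints in $[k]$, and is computable in $O(k^2)$ time. For (\ref{ee4}) one precomputes, for each $j\in[k]$, the sum $\sum_{\ell\in[n]\setminus\{i_1,\ldots,i_k\}} c(i_j\ell)$ in $O(n)$ operations, reads $|N_F(j)\setminus[k]|$ from $F$ in $O(n)$ time, and combines them, so the whole term costs $O(nk)$. For (\ref{ee5}) one computes $c(K-\{i_1,\ldots,i_k\})$ in $O(n^2)$ time by summing the labels on the $\binom{n-k}{2}$ remaining edges, divides by this count, and reads off $m(F-[k])$ in $O(n)$ time from $F$. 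The overall running time is $O(n^2)$.

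The only real subtlety is the exchangeability claim used in the derivation of (\ref{ee4}) and (\ref{ee5}): conditional on $\mathcal{A}$, the tuple $(\pi(k+1),\ldots,\pi(n))$ is a uniformly random arrangement of $[n]\setminus\{i_1,\ldots,i_k\}$, which justifies both the uniform marginal for a single unknown $\pi(v)$ and the uniform distribution over ordered pairs for two distinct unknown $\pi(u),\pi(v)$. Beyond this, the proof is a routine edge-by-edge application of linearity of expectation combined with the trivial bookkeeping in the running-time analysis, so I do not anticipate any genuine obstacle.
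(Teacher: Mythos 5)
Your proposal is correct and follows essentially the same route as the paper: the paper establishes Lemma \ref{lemma1} precisely by exhibiting the decomposition of the conditional expectation into the three terms (\ref{ee3}), (\ref{ee4}), (\ref{ee5}) via linearity of expectation, and you verify the same identity (with the exchangeability justification made explicit) and add the routine running-time bookkeeping that the paper leaves implicit.
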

Implementing the method of conditional expectation in the present context,
we will now explain how to determine a permutation $\pi$ in $S_n$,
in other words, an isomorphic copy $F_\pi$ of $F$ within $K$,
for which $|c(F_\pi)|$ is small
by fixing the values $\pi(1),\ldots,\pi(n)$ one by one. 

There are different reasonable ways to do this.

Mimicking the proof of Proposition \ref{proposition1} in \cite{mopara},
we obtain the following.

\begin{proposition}\label{proposition2}
Given $K$, $c$, and $F$,
a permutation $\pi$ in $S_n$ with $|c(F_\pi)|\leq \Delta+1$ 
can be determined in polynomial time.
\end{proposition}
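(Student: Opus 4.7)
The plan is to turn the existence proof of Proposition~\ref{proposition1} from \cite{mopara} into a polynomial-time algorithm. That proof uses two ingredients: Observation~1 (symmetry/averaging) to produce copies of nonnegative and nonpositive weight, and Observation~2 (a discrete intermediate-value argument) to locate a ``crossing'' copy in a chain of local modifications. Observation~2 is already algorithmic; the novelty here is to replace Observation~1 by the method of conditional expectation developed in this section.

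\emph{Non-negative and non-positive copies.} I build $\pi^+\in S_n$ with $c(F_{\pi^+})\ge 0$ greedily using Lemma~\ref{lemma1}. For $k=0,1,\ldots,n-1$, having fixed $(\pi^+(1),\ldots,\pi^+(k))=(i_1,\ldots,i_k)$, I iterate over $i\in[n]\setminus\{i_1,\ldots,i_k\}$, compute
$$E(i):=\mathbb{E}\left[c(F_\pi)\mid (\pi(1),\ldots,\pi(k+1))=(i_1,\ldots,i_k,i)\right]$$
via Lemma~\ref{lemma1}, and set $\pi^+(k+1)$ to any $i$ maximizing $E(i)$. By (\ref{ee2}) the average of the $E(i)$ over valid $i$ equals the previous conditional expectation, so the chosen value cannot decrease it; starting from $\mathbb{E}[c(F_\pi)]=0$ by (\ref{ee-1}), the conditional expectation stays $\ge 0$ throughout, and after $n$ steps it equals $c(F_{\pi^+})$. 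Choosing a minimizer instead yields $\pi^-\in S_n$ with $c(F_{\pi^-})\le 0$.

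\emph{Interpolation, and main obstacle.} Let $v$ be a leaf of $F$, which exists whenever $F$ has an edge (the edgeless case is trivial). The star transpositions $\{(v,j):j\in[n]\setminus\{v\}\}$ generate $S_n$, so I write $\pi^-(\pi^+)^{-1}$ as a product of $O(n)$ such transpositions $\tau_1,\ldots,\tau_L$ and set $\pi_t:=\tau_t\cdots\tau_1\pi^+$, giving a chain $\pi_0=\pi^+,\pi_1,\ldots,\pi_L=\pi^-$. Each $\tau_t$ swaps the images of the leaf $v$ (of degree $1$ in $F$) and some other position of degree at most $\Delta$, so the symmetric difference of the edge sets of $F_{\pi_{t-1}}$ and $F_{\pi_t}$ splits into at most $\Delta+1$ removed and at most $\Delta+1$ added edges. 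Observation~2 then yields some $t$ with $|c(F_{\pi_t})|\le \Delta+1$, which I locate by a linear scan over $t$ and output. The entire procedure is polynomial by Lemma~\ref{lemma1}. The subtlety worth flagging is the bound $\Delta+1$ in this last step: an arbitrary chain of transpositions could change up to $2\Delta$ edges on each side, giving only $|c|\le 2\Delta$; restricting to star transpositions through a leaf of $F$ is the clean way to secure the sharper bound claimed by Proposition~\ref{proposition1}.
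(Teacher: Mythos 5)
Your proposal follows essentially the same route as the paper: use the greedy conditional-expectation argument (via Lemma~\ref{lemma1}, (\ref{ee1}), and (\ref{ee2})) to construct $\pi^+$ and $\pi^-$ with $c(F_{\pi^+})\geq 0\geq c(F_{\pi^-})$ in polynomial time, then interpolate between them by transpositions through a vertex of degree at most $1$ in $F$ and invoke the discrete intermediate-value observation. Both halves are the right idea and match the paper's proof.

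One technical slip needs fixing: you factor $\pi^-(\pi^+)^{-1}$ into star transpositions and set $\pi_t=\tau_t\cdots\tau_1\pi^+$, i.e.\ you compose on the \emph{range} side. A transposition $\tau_t=(v,j)$ applied there swaps which roles the $K$-vertices $v$ and $j$ play, namely the roles of the $F$-vertices $\pi_{t-1}^{-1}(v)$ and $\pi_{t-1}^{-1}(j)$ --- and $\pi_{t-1}^{-1}(v)$ need not be a leaf of $F$ even though $v$ is. So as written, a single step can change up to $2\Delta$ edges, which is exactly the loss you yourself warn against in your last sentence. The fix is to transpose on the \emph{domain} side: factor $(\pi^+)^{-1}\pi^-=\tau_1\cdots\tau_L$ into star transpositions through the leaf $v$ of $F$ and set $\pi_t=\pi_{t-1}\tau_t$; then the edges of $F$ whose images change under $\pi_t$ versus $\pi_{t-1}$ are precisely those incident with $v$ or $j$ in $F$, at most $1+\Delta$ of each, giving the claimed bound $\Delta+1$. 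With that correction the argument is complete and coincides with the paper's.
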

\begin{proof}
By (\ref{ee1}) and (\ref{ee2}), there is an ordering $i_1,\ldots,i_n$ of $[n]$ such that
\begin{eqnarray*}
\mathbb{E}\left[c(F_\pi)\mid \pi(1)=i_1\right] & \geq & \mathbb{E}\left[c(F_\pi)\right]=0\mbox{ and}\\
\mathbb{E}[c(F_\pi)\mid (\pi(1),\ldots,\pi(k+1))=(i_1,\ldots,i_{k+1})] 
&\geq & \mathbb{E}[c(F_\pi)\mid (\pi(1),\ldots,\pi(k))=(i_1,\ldots,i_k)]
\end{eqnarray*}
for every $k$ in $[n-1]$.
Furthermore, by (\ref{ee1}), (\ref{ee2}), and Lemma \ref{lemma1},
such an ordering $i_1,\ldots,i_n$ can be found in polynomial time.
In other words, in polynomial time one can determine a
permutation $\pi_+$ in $S_n$ with $c(F_{\pi_+})\geq 0$.
Similarly, in polynomial time one can determine a
permutation $\pi_-$ in $S_n$ with $c(F_{\pi_-})\leq 0$.
Considering transpositions of pairs of vertices of $F$ 
always involving at least one vertex of degree at most $1$, 
cf.~the proof of Proposition \ref{proposition1} in~\cite{mopara},
one can determine in polynomial time 
a sequence $\pi_0,\ldots,\pi_r$
of permutations from $S_n$ such that 
$r$ is polynomially bounded in terms of $n$,
$\pi_0=\pi_+$, 
$\pi_r=\pi_-$, and,
for every $i$ in $[r]$,
$F_{\pi_i}$ arises from $F_{\pi_{i-1}}$
by removing at most $\Delta+1$ edges and adding at most $\Delta+1$ edges.
The second of the two observations mentioned before Proposition \ref{proposition1}
implies $\min\left\{ |c(F_{\pi_i})|:i\in \{ 0,\ldots,r\}\right\}\leq \Delta+1$,
and returning a permutation $\pi_i$ minimizing $|c(F_{\pi_i})|$
accomplishes the desired task.
\end{proof}
Proposition \ref{proposition2} does not really exploit that $F$ is a forest.
In fact, it can easily be adapted to the situation in which $F$
is not a forest 
replacing the bound $\Delta+1$ 
by $\Delta+\delta$,
where $\delta$ is the minimum degree of $F$.
While Proposition \ref{proposition2} corresponds to an algorithmic version 
of the existential argument behind Proposition \ref{proposition1},
there is actually a more natural way of implementing 
the method of conditional expectation for our problem,
choosing the vertices $i_1,\ldots,i_n$ 
one by one in this order
in such a way that the absolute value of each conditional expected value
(\ref{ee0}) is as small as possible.
Our proof of Theorem \ref{theorem1} 
relies on the analysis of this more natural greedy approach.
Note that there is one degree of freedom that we did not exploit so far;
we can freely choose the order in which the vertices of $F$ 
are embedded one by one into $K$.

\section{Proof of Theorem \ref{theorem1}}\label{section3}

Throughout this section, 
let $\epsilon>0$,
let $K$ be a complete graph of order $n$,
let $c:E(K)\to \{ -1,1\}$ be a zero-sum labeling of $K$, and 
let $F$ be a spanning forest of $K$ of maximum degree $\Delta$.
In view of the statement of Theorem \ref{theorem1} and Proposition \ref{proposition1}, 
we may assume that $\epsilon<\frac{1}{4}$,
and that $n$ is sufficiently large in terms of $\epsilon$.
Possibly replacing $\epsilon$ by a slightly smaller value, 
we may furthermore assume, for notational simplicity, that $\epsilon n$ is an integer.

Since $m(F)\leq n-1$, the forest $F$ has less than $\epsilon n$ 
vertices of degree more than $\frac{2}{\epsilon}$.
Hence, since every induced subgraph of $F$ is $1$-degenerate,
we may assume, possibly by reordering/renaming the vertices of $K$ and $F$, that
\begin{eqnarray}
|N_F(i)\cap [i-1]|&\leq &1\,\,\,\,\,\,\,\mbox{ for every $i$ in $[\epsilon n]$, and}\label{ec1}\\
d_F(i)&\leq &\frac{2}{\epsilon}\,\,\,\,\,\,\,\mbox{ for every $i$ in $[n]\setminus [\epsilon n]$.}\label{ec2}
\end{eqnarray}
Note that the possible reordering/renaming of the vertices of $K$
can be performed in polynomial time.

For distinct $i_1,\ldots,i_k$ from $[n]$, let
$$\mathbb{E}[i_1,\ldots,i_k]=\mathbb{E}[c(F_\pi)\mid (\pi(1),\ldots,\pi(k))=(i_1,\ldots,i_k)].$$
For $k=0$, let $\mathbb{E}[i_1,\ldots,i_k]=\mathbb{E}[c(F_\pi)]\stackrel{(\ref{ee-1})}{=}0$.

Now, in order to determine $F'$ in polynomial time, 
we consider the following natural greedy algorithm:
\begin{quote}
{\it Choose $i_1,\ldots,i_n$
one by one in this order in such a way that in every step
$\left|\mathbb{E}[i_1,\ldots,i_k]\right|$
is minimized,
that is,
$i_j={\rm arg}\min \{ \left|\mathbb{E}[i_1,\ldots,i_k,p]\right|:p\in [n]\setminus \{ i_1,\ldots,i_k\}\}$ for every $j$ in $[n]$.}
\end{quote}
By (\ref{ee1}), (\ref{ee2}), and Lemma \ref{lemma1},
the values $i_1,\ldots,i_n$,
which completely determine $\pi$ and $F'$,
can be determined in polynomial time.
Therefore, in order to complete the proof,
it suffices to show that  
\begin{eqnarray}\label{ec3}
\left|\mathbb{E}[i_1,\ldots,i_k]\right|
& \leq & 
\left(\frac{3}{4}+327\epsilon\right)\Delta+\left(\frac{8}{\epsilon}+4\right)
\end{eqnarray}
for every $k$ in $[n]$.
We establish (\ref{ec3}) using the following two claims.

\begin{claim}\label{claim1}
For every $k$ in $\{ 0,\ldots,n-1\}$,
there is some $p$ in $[n]\setminus \{ i_1,\ldots,i_k\}$
such that
\begin{itemize}
\item If $\mathbb{E}[i_1,\ldots,i_k]>0$, then $\mathbb{E}[i_1,\ldots,i_k,p]\leq \mathbb{E}[i_1,\ldots,i_k]$,
\item if $\mathbb{E}[i_1,\ldots,i_k]<0$, then $\mathbb{E}[i_1,\ldots,i_k,p]\geq \mathbb{E}[i_1,\ldots,i_k]$, and
\item 
$$\left|\mathbb{E}[i_1,\ldots,i_k,p]-\mathbb{E}[i_1,\ldots,i_k]\right|
\leq \left(1+\frac{16}{3}\epsilon\right)\Delta+\left(\frac{8}{\epsilon}+4\right).$$
\end{itemize}
\end{claim}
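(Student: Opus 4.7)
The first two bullets follow immediately from \eqref{ee2}: that identity says $\mathbb{E}[i_1,\ldots,i_k]$ is the arithmetic mean of the values $\mathbb{E}[i_1,\ldots,i_k,p]$ over $p\in R:=[n]\setminus\{i_1,\ldots,i_k\}$, so there must exist $p^-,p^+\in R$ with $\mathbb{E}[i_1,\ldots,i_k,p^-]\leq \mathbb{E}[i_1,\ldots,i_k]\leq \mathbb{E}[i_1,\ldots,i_k,p^+]$. My plan is therefore to prove the third bullet \emph{for every} $p\in R$; then any $p$ witnessing the correct sign automatically satisfies the magnitude bound, and all three bullets hold simultaneously for the same $p$.

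For the magnitude bound I would expand both $\mathbb{E}[i_1,\ldots,i_k]$ and $\mathbb{E}[i_1,\ldots,i_k,p]$ via \eqref{ee3}--\eqref{ee5} and organize the difference by edge of $F$, classifying each edge $uv$ according to where $\{u,v\}$ sits relative to the partition $[k]\cup\{k+1\}\cup[k+2,n]$. With $R':=R\setminus\{p\}$, four sums arise: (II) edges with $u\in[k]$, $v=k+1$, each contributing the ``large'' change $c(i_up)-\bar{c}(\{i_u\ell:\ell\in R\})$; (III) edges with $u\in[k]$, $v\in[k+2,n]$, each contributing the ``small'' change $\bigl(\bar{c}(\{i_u\ell:\ell\in R\})-c(i_up)\bigr)/(|R|-1)$; (IV) edges with $u=k+1$, $v\in[k+2,n]$, each contributing the large change $\bar{c}(\{p\ell:\ell\in R'\})-\bar{c}(K[R])$; and (V) edges within $[k+2,n]$, each contributing the small change $\bar{c}(K[R'])-\bar{c}(K[R])$. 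The per-edge contributions in (II) and (IV) are bounded by $2$, whereas those in (III) and (V) are of order $1/|R|$.

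I would then split into two subcases according to \eqref{ec1}--\eqref{ec2}. If $k+1>\epsilon n$, every vertex of $\{k+1\}\cup[k+2,n]$ has $F$-degree at most $2/\epsilon$; hence (II)$+$(IV) is bounded by $2d_F(k+1)\leq 4/\epsilon$, and since $[k+2,n]$ carries $O((n-k)/\epsilon)$ incident edge-ends of $F$, the perturbative (III)$+$(V) also total $O(1/\epsilon)$, all fitting inside the additive $(8/\epsilon+4)$-allowance. If $k+1\leq \epsilon n$, then \eqref{ec1} bounds (II) by $2$, and the decisive contribution is $(\mathrm{IV})=d_D\bigl(\bar{c}(\{p\ell:\ell\in R'\})-\bar{c}(K[R])\bigr)$ with $d_D\leq \Delta$. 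The zero-sum hypothesis enters here decisively: since $c(K)=0$, $|c(K[R])|\leq \binom{k}{2}+k(n-k)$, so
$$|\bar{c}(K[R])|\leq \frac{\binom{k}{2}+k(n-k)}{\binom{n-k}{2}}\leq \frac{16\epsilon}{3}$$
for $\epsilon\leq 1/4$, $k\leq \epsilon n$, and $n$ sufficiently large. Combined with $\bar{c}(\{p\ell:\ell\in R'\})\in[-1,1]$, this yields $|(\mathrm{IV})|\leq (1+16\epsilon/3)\Delta$, while (III) and (V) remain of size $O(1)$ since $|R|\geq(1-\epsilon)n$ and $F$ has at most $n-1$ edges in total. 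Summing across all four types gives the target magnitude bound.

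The \textbf{main obstacle} is producing the precise leading constant $1+16\epsilon/3$: it traces back entirely to the zero-sum-driven bound on $|\bar{c}(K[R])|$, the only nontrivial algebraic input. The rest is careful accounting, making sure the perturbative sums (III) and (V) never consume $\Delta$-sized budget (using $|R|\geq(1-\epsilon)n$ in the $k+1\leq \epsilon n$ case and the degree cap $2/\epsilon$ in the $k+1>\epsilon n$ case), and verifying that the various $O(1/\epsilon)$ error terms in the first case all fit inside $8/\epsilon+4$.
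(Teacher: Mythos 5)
Your proposal is correct and follows essentially the same route as the paper: the sign bullets come from the averaging identity (\ref{ee2}), the magnitude bound is proved for every admissible $p$ via the same edge-type decomposition of $\mathbb{E}[i_1,\ldots,i_k,p]-\mathbb{E}[i_1,\ldots,i_k]$ (the paper's display (\ref{eclaim11-1})), and the two cases $k+1\leq\epsilon n$ versus $k+1>\epsilon n$ are handled with the same key inputs — the zero-sum-driven bound $|\bar{c}(K[R])|\leq\frac{16}{3}\epsilon$ (the paper gets it by iterating Claim~\ref{claim3}, you by counting removed edges, which is equivalent) and the degree cap $\frac{2}{\epsilon}$ from (\ref{ec2}). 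The only cosmetic difference is that the paper states the bound for the particular $p$ produced by the averaging argument while you note explicitly that the derivation is uniform in $p$; the estimates themselves coincide.
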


\begin{claim}\label{claim2}
For every $k$ in $\{ 0,\ldots,n-1\}$, 
there is some $p$ in $[n]\setminus \{ i_1,\ldots,i_k\}$
such that 
$$\left|\mathbb{E}[i_1,\ldots,i_k,p]-\mathbb{E}[i_1,\ldots,i_k]\right|
\leq 
\left(\frac{1}{2}+327\epsilon\right)\Delta+\left(\frac{8}{\epsilon}+4\right).$$
\end{claim}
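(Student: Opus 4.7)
I would prove Claim~\ref{claim2} by deriving an explicit formula for the one-step change
$E_p-E:=\mathbb{E}[i_1,\ldots,i_k,p]-\mathbb{E}[i_1,\ldots,i_k]$
and then exploiting the zero-sum hypothesis together with the ordering (\ref{ec1})--(\ref{ec2}) to select a suitable $p$.

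Starting from (\ref{ee3})--(\ref{ee5}) and grouping contributions by the type of $F$-edge involved (both endpoints in $[k]$; one endpoint equal to $k+1$ and the other in $[k]$; one endpoint equal to $k+1$ and the other in $[k+2,n]$; both endpoints in $[k+2,n]$), one obtains a decomposition
$$E_p-E=A(p)+\alpha\cdot s(p)+\beta',$$
modulo a lower-order term of absolute value $O(1)$, where $A(p)=\sum_{j\in N_F(k+1)\cap[k]}c(i_jp)$ is the weight of the $F$-edges at vertex $k+1$ that are determined upon setting $\pi(k+1)=p$; $s(p)=c(\{p\ell:\ell\in[n]\setminus(\{i_1,\ldots,i_k\}\cup\{p\})\})$ is the signed degree of $p$ into the as-yet-unused vertices; $\alpha$ is of order $(d_F(k+1)-|N_F(k+1)\cap[k]|)/(n-k-1)$; and $\beta'$ is the constant forcing $\sum_p(E_p-E)=0$. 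Using $c(E(K))=0$ and $k\le\epsilon n$ yields $|\beta'|=O(\epsilon\Delta)$.

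When $k+1>\epsilon n$, condition (\ref{ec2}) gives $d_F(k+1)\le 2/\epsilon$, so $|E_p-E|\le 2/\epsilon+O(1)$ for any $p$ and the claim is immediate. When $k+1\le\epsilon n$, condition (\ref{ec1}) gives $|A(p)|\le 1$ and the task reduces to bounding $|\alpha\cdot s(p)|$. Since the residual $C_k:=c(E(K-[k]))$ satisfies $|C_k|\le k(n-k)+\binom{k}{2}=O(\epsilon n^2)$, the average of $s(p)$ over the unused vertices is $O(\epsilon n)$. The central step is a combinatorial claim: under $|C_k|=O(\epsilon n^2)$, some $p$ satisfies $|s(p)|/(n-k-1)\le \tfrac{1}{2}+O(\epsilon)$. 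I would establish this by contradiction, partitioning the unused vertices into $T^+$ and $T^-$ according to the sign of $s(p)$ and using the identity $\sum_p s(p)=2C_k$, the feasibility constraints $0\le\alpha_\pm\le\binom{|T^\pm|}{2}$ on the in-class $+1$-edge counts, and the cardinality identity $|T^+|+|T^-|=n-k$ to derive a contradiction; an extremal analysis pinpoints the worst case as a two-class construction with mostly $+1$'s inside $T^+$, mostly $-1$'s inside $T^-$, and a balanced bipartite part between.

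The main difficulty is sharpening the error term in the structural step: a naive second-moment estimate only yields an $O(\sqrt{\epsilon})$ correction, whereas the linear $O(\epsilon)$ required for the stated constants needs the extremal argument above, in which the distance of the given labeling from the all-$|s(p)|=\lfloor(n-k-2)/2\rfloor$ configuration is controlled by $|C_k|$. With the structural bound in hand,
$$|E_p-E|\le|A(p)|+|\alpha|\cdot|s(p)|+|\beta'|+O(1)\le 1+\Bigl(\tfrac{1}{2}+O(\epsilon)\Bigr)\Delta+O(\epsilon\Delta)+O(1),$$
which after merging with the first regime and collecting constants yields the claimed $\bigl(\tfrac{1}{2}+327\epsilon\bigr)\Delta+\bigl(\tfrac{8}{\epsilon}+4\bigr)$ bound.
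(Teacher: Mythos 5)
Your proposal follows the same overall architecture as the paper's proof: the same case split at $k+1\le\epsilon n$ versus $k+1>\epsilon n$ (with the latter case disposed of by (\ref{ec2}) exactly as in Claim \ref{claim1}), the same decomposition of the one-step change into the newly determined edge weights $A(p)=c_1$, the term $\bar c_k'(p)d_2=\alpha\, s(p)$ coming from the $d_2$ undetermined edges at $k+1$, and $O(1)$ plus $O(\epsilon\Delta)$ remainders, and the same key structural ingredient, namely that among the unused vertices there is one whose normalized signed degree $|s(p)|/(n-k-1)$ is at most $\frac12+O(\epsilon)$. That last statement is the paper's Claim \ref{claim4}, and it is where essentially all the work lies: the paper devotes over a page to it, arguing by contradiction from an extremal counterexample, using edge swaps to force $V_+$ to be a clique and $V_-$ independent, and then deriving the incompatible inequalities $n_+<n'/2$ and $n_+>n'/2$. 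You only sketch this step, but your sketch is viable and in fact somewhat leaner: writing $a=|T^+|$, $N=n-k$, and using $2e_++e_0=m+e_+-e_-\le m+\binom{a}{2}$ against the lower bound $\approx\frac34 aN$ forces $a<N/2$, while $2e_-+e_0\ge m-\binom{a}{2}$ against the upper bound $\approx\frac14(N-a)N$ forces $a>N/2$ --- the same dichotomy the paper reaches, obtained directly from the feasibility constraints without the extremal-counterexample machinery. Your observation that a second-moment or averaging bound only yields an $O(\sqrt{\epsilon})$ correction, so that the linear-in-$\epsilon$ extremal analysis is genuinely needed, is also correct and matches why the paper proves Claim \ref{claim4} the way it does. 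The one caveat is that your write-up leaves the structural lemma, and the propagation of the $O(\epsilon)$ error terms through it, at the level of an outline; to match the explicit constants $327\epsilon$ and $\frac{8}{\epsilon}+4$ you would still have to carry out that bookkeeping, but no step of your plan would fail.
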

Before we prove these two claims, we explain how they imply (\ref{ec3}).
Since $c$ is a zero-sum labeling, 
(\ref{ec3}) holds for $k=0$.
Now, 
if (\ref{ec3}) holds for some $k$ 
and $|\mathbb{E}[i_1,\ldots,i_k]|\leq \frac{1}{4}\Delta$,
then Claim \ref{claim2} implies the existence of a possible choice $p$ for $i_{k+1}$
with $|\mathbb{E}[i_1,\ldots,i_{k+1}]|$
bounded as in (\ref{ec3}).
Therefore, by the selection rule of the greedy algorithm,
(\ref{ec3}) holds for $k+1$ (instead of $k$).
Otherwise,
if (\ref{ec3}) holds for some $k$ but $|\mathbb{E}[i_1,\ldots,i_k]|>\frac{1}{4}\Delta$,
then Claim \ref{claim1} implies the existence of a possible choice $p$ for $i_{k+1}$
with $|\mathbb{E}[i_1,\ldots,i_{k+1}]|$
bounded as in (\ref{ec3}).
Again, also in this case, (\ref{ec3}) holds for $k+1$ (instead of $k$).
Altogether, a simple inductive argument yields (\ref{ec3}) for all $k$ in $[n]$.

We fix some abbreviating notation.

For $k\in [n]$ and $j\in [k]$, let 
\begin{eqnarray*}
c_{[k]}&=&c\left(F_\pi\left[\{  i_1,\ldots,i_k\}\right]\right),\\
d_k(j) & = & |N_F(j)\setminus [k]|,\\
m_k & = & m(F-[k]),\\
\bar{c}_k(j) &=& \bar{c}\left(\{ i_j\ell:\ell\in [n]\setminus \{  i_1,\ldots,i_k\}\}\right),\mbox{ and }\\
\bar{c}_k &=& \bar{c}\left(K-\{  i_1,\ldots,i_k\}\right).
\end{eqnarray*}
With these abbreviations,
\begin{eqnarray}\label{ee0b}
\mathbb{E}[i_1,\ldots,i_k]
=c_{[k]}
+\sum\limits_{j=1}^k\bar{c}_k(j)d_k(j)
+\bar{c}_km_k.
\end{eqnarray}

\begin{claim}\label{claim3}
If $p$ and $q$ are positive integers with $q<p$ and $x_1,\ldots,x_p\in \{ -1,1\}$,
then
$$
\left|\frac{1}{p}\sum_{i=1}^px_i-\frac{1}{p-q}\sum_{i=1}^{p-q}x_i\right|
\leq \frac{2q}{p}.
$$
\end{claim}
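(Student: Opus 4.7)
The plan is to reduce the two-term difference to a single algebraic expression and then apply the trivial bound $|x_i|=1$ twice, once directly and once in the form of an averaged bound.

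First I would introduce abbreviations $a=\frac{1}{p}\sum_{i=1}^p x_i$ and $b=\frac{1}{p-q}\sum_{i=1}^{p-q}x_i$, and set $R=\sum_{i=p-q+1}^p x_i$, so that $pa=(p-q)b+R$. Solving for $a$ yields
$$a-b=\frac{R-qb}{p}.$$
This identity is really the whole content of the claim, because it isolates exactly the two ingredients that one can control by the $\pm 1$ hypothesis: the tail sum $R$ consists of $q$ summands from $\{-1,1\}$, hence $|R|\le q$; and $b$, being an average of $\pm 1$ values, satisfies $|b|\le 1$.

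Combining these with the triangle inequality gives
$$|a-b|\le \frac{|R|+q|b|}{p}\le \frac{q+q}{p}=\frac{2q}{p},$$
which is the desired bound.

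There is no real obstacle here; the only subtlety is the decision to trade off $R$ against $qb$ rather than trying to compare the two partial sums directly, and to exploit that the shorter average $b$ has modulus at most $1$ (this is where the $\pm 1$ structure enters symmetrically for both terms). An equivalent route writes
$$\frac{S}{p}-\frac{T}{p-q}=\frac{(p-q)R-qT}{p(p-q)}$$
with $S=\sum_{i=1}^p x_i$ and $T=\sum_{i=1}^{p-q}x_i$, and then bounds $|R|\le q$ and $|T|\le p-q$ to obtain the numerator by $2q(p-q)$; after cancellation this again yields $2q/p$. Either formulation is a few lines.
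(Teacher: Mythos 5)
Your proof is correct and is essentially the paper's argument in different notation: your identity $a-b=\frac{R-qb}{p}$ is exactly the paper's decomposition $\left(\frac{1}{p}-\frac{1}{p-q}\right)\sum_{i=1}^{p-q}x_i+\frac{1}{p}\sum_{i=p-q+1}^{p}x_i$, and you bound the two pieces by $|R|\leq q$ and $|b|\leq 1$ just as the paper does. Nothing to add.
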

\begin{proof}
\begin{eqnarray*}
\left|\frac{1}{p}\sum_{i=1}^px_i-\frac{1}{p-q}\sum_{i=1}^{p-q}x_i\right|
& = & \left|\left(\frac{1}{p}-\frac{1}{p-q}\right)\sum_{i=1}^{p-q}x_i+\frac{1}{p}\sum_{i=p-q+1}^{p}x_i\right|
\leq 
\frac{q}{p(p-q)}\underbrace{\left|\sum_{i=1}^{p-q}x_i\right|}_{\leq p-q}
+\frac{1}{p}\underbrace{\left|\sum_{i=p-q+1}^{p}x_i\right|}_{\leq q}
\leq \frac{2q}{p}.
\end{eqnarray*}
\end{proof}
Below, we shall apply Claim \ref{claim3} mainly in the following settings:
\begin{itemize}
\item $p=n-k$ and $q=1$, in which case $\frac{2q}{p}=\frac{2}{n-k}$, and
\item $p={n-k\choose 2}$ and $q=n-k-1$, in which case $\frac{2q}{p}=\frac{4}{n-k}$.
\end{itemize}
We proceed to the proof of Claim \ref{claim1}.

\begin{proof}[Proof of Claim \ref{claim1}]
By symmetry, we may assume that $\mathbb{E}[i_1,\ldots,i_k]>0$.
By (\ref{ee2}), there is some $p$ in $[n]\setminus \{ i_1,\ldots,i_k\}$ with 
$\mathbb{E}[i_1,\ldots,i_k,p]\leq \mathbb{E}[i_1,\ldots,i_k]$.
We will argue that $p$ already satisfies the desired inequality.

Let 
$$d_1=|N_F(k+1)\cap [k]|
\,\,\,\,\,\, \mbox{ and }\,\,\,\,\,\, d_2=d_F(k+1)-d_1,$$
that is, $d_2=d_{k+1}(k+1)=|N_F(k+1)\setminus [k+1]|$.

Let
\begin{eqnarray*}
c_1 & = & c(\{ pi_\ell:\ell\in N_F(k+1)\cap [k]\}),\\
\bar{c}'_k(j) &=& \bar{c}\left(\{ i_j\ell:\ell\in [n]\setminus \{  i_1,\ldots,i_k,p\}\}\right)\mbox{ for $j$ in $[k]$},\\
\bar{c}'_k(p) &=& \bar{c}\left(\{ p\ell:\ell\in [n]\setminus \{  i_1,\ldots,i_k,p\}\}\right),\mbox{ and}\\
\bar{c}'_k &=& \bar{c}\left(K-\{  i_1,\ldots,i_k,p\}\right),
\end{eqnarray*}
that is, going 
from $\bar{c}_k(j)$ to $\bar{c}'_k(j)$ or 
from $\bar{c}_k$ to $\bar{c}'_k$ 
corresponds to the possibly alternative choice of $i_{k+1}$ as $p$.

Note that 
$m_{k+1}=m_k-d_2$
and $|c_1|\leq d_1$.

Furthermore, by Claim \ref{claim3},
$|\bar{c}'_k(j)-\bar{c}_k(j)|\leq \frac{2}{n-k}$ for $j\in [k]$
and
$|\bar{c}'_k-\bar{c}_k|\leq \frac{4}{n-k}$.

By (\ref{ee0b}), we have
\begin{eqnarray}
&& |\mathbb{E}[i_1,\ldots,i_k,p]-\mathbb{E}[i_1,\ldots,i_k]|\nonumber\\
&=& \left|c_1
+\sum\limits_{j=1}^k\bar{c}'_k(j)d_{k+1}(j)
+\bar{c}'_k(p)d_2
+\bar{c}'_km_{k+1}
-\sum\limits_{j=1}^k\bar{c}_k(j)d_k(j)
-\bar{c}_km_k
\right|\nonumber\\
&=& 
\left|c_1
+\sum\limits_{j=1}^k\Big(\bar{c}'_k(j)-\bar{c}_k(j)\Big)d_{k+1}(j)
+\sum\limits_{j=1}^k\bar{c}_k(j)\Big(d_{k+1}(j)-d_k(j)\Big)
+\bar{c}'_k(p)d_2
+(\bar{c}'_k-\bar{c}_k)m_k
-\bar{c}'_kd_2
\right|\label{eclaim11-1}\\
& \leq & 
d_1
+\frac{2}{n-k}\sum\limits_{j=1}^kd_{k+1}(j)
+\sum\limits_{j=1}^k|\bar{c}_k(j)|\Big|d_{k+1}(j)-d_k(j)\Big|
+\Big(1+|\bar{c}'_k|\Big)d_2
+\frac{4}{n-k}m_k\label{eclaim11}
\end{eqnarray}
First, we consider the case that $k+1\leq \epsilon n$.

Trivially, 
$$n-k\geq (1-\epsilon)n\stackrel{\epsilon<\frac{1}{4}}{\geq}\frac{3}{4}n,\,\,\,\,\,\,\,\,
\sum\limits_{j=1}^kd_{k+1}(j)\leq m(F)<n,\,\,\,\,\mbox{ and }\,\,\,\,\,\,\,\,
m_k\leq m(F)<n.$$
By (\ref{ec1}),
we have 
$$\sum\limits_{j=1}^k\Big|d_{k+1}(j)-d_k(j)\Big|=d_1\leq 1.$$
Since $c$ is a zero-sum labeling,
a simple inductive argument based on Claim \ref{claim3} implies
$$|\bar{c}'_k|
\leq \frac{4}{n}+\frac{4}{n-1}+\cdots+\frac{4}{n-k}
\leq \frac{4(k+1)}{n-k}
\leq \frac{4\epsilon n}{n-k}
\leq \frac{16}{3}\epsilon.$$
Now, (\ref{eclaim11}) implies
\begin{eqnarray*}
&&|\mathbb{E}[i_1,\ldots,i_k,p]-\mathbb{E}[i_1,\ldots,i_k]|\\
& \leq & 
\underbrace{d_1}_{\leq 1}
+\underbrace{\frac{2}{n-k}}_{\leq \frac{2}{\frac{3}{4}n}}
\underbrace{\sum\limits_{j=1}^kd_{k+1}(j)}_{\leq n}
+\underbrace{\sum\limits_{j=1}^k\underbrace{|\bar{c}_k(j)|}_{\leq 1}\Big|d_{k+1}(j)-d_k(j)\Big|}_{\leq 1}
+\Big(1+\underbrace{|\bar{c}'_k|}_{\leq \frac{16}{3}\epsilon}\Big)\underbrace{d_2}_{\leq \Delta}
+\underbrace{\frac{4}{n-k}}_{\leq\frac{4}{\frac{3}{4}n}}
\underbrace{m_k}_{\leq n}\\
& \leq & 
1
+\frac{2n}{\frac{3}{4}n}
+1
+\left(1+\frac{16}{3}\epsilon\right)\Delta
+\frac{4n}{\frac{3}{4}n}\\
&=&\left(1+\frac{16}{3}\epsilon\right)\Delta+10\\
&\leq &\left(1+\frac{16}{3}\epsilon\right)\Delta+\left(\frac{8}{\epsilon}+4\right).
\end{eqnarray*}
Next, we consider the case that $k+1>\epsilon n$.

By (\ref{ec2}), we obtain that $d_1+d_2=d_F(k+1)\leq \frac{2}{\epsilon}$,
and also that 
$$\sum\limits_{j=1}^kd_{k+1}(j)
\leq \sum\limits_{j=k+1}^nd_F(j)
\leq \frac{2}{\epsilon}(n-k).$$
Since $F$ is a forest, we have $m_k=m(F-[k])<n-k$.

Now, (\ref{eclaim11}) implies
\begin{eqnarray}
&&|\mathbb{E}[i_1,\ldots,i_k,p]-\mathbb{E}[i_1,\ldots,i_k]|\nonumber\\
& \leq & 
d_1
+\frac{2}{n-k}\underbrace{\sum\limits_{j=1}^kd_{k+1}(j)}_{\leq \frac{2}{\epsilon}(n-k)}
+\underbrace{\sum\limits_{j=1}^k\underbrace{|\bar{c}_k(j)|}_{\leq 1}\Big|d_{k+1}(j)-d_k(j)\Big|}_{\leq d_1}
+\Big(1+\underbrace{|\bar{c}'_k|}_{\leq 1}\Big)d_2
+\frac{4}{n-k}\underbrace{m_k}_{\leq n-k}\nonumber\\
& \leq & \underbrace{2d_1+2d_2}_{\leq \frac{4}{\epsilon}}+\frac{4}{\epsilon}+4\nonumber\\
& \leq & \frac{8}{\epsilon}+4\label{eclaim1r}\\
&\leq &\left(1+\frac{16}{3}\epsilon\right)\Delta+\left(\frac{8}{\epsilon}+4\right),\nonumber
\end{eqnarray}
which completes the proof.
\end{proof}
For the proof of Claim \ref{claim2},
we need the following generalization of Theorem 2 from~\cite{mopara}.

\begin{claim}\label{claim4}
Let the positive real $\epsilon$ and the integer $n$ 
be such that $\epsilon n\geq 10$.

If $G$ is a graph of order $n$ and size $m$ such that 
$$\left|m-\frac{1}{2}{n\choose 2}\right|\leq \frac{\epsilon}{10}{n\choose 2},$$
then 
$$\left(\frac{1}{4}-\epsilon\right)n\leq d_G(u)\leq \left(\frac{3}{4}+\epsilon\right)n-1$$
for some vertex $u$ of $G$.
\end{claim}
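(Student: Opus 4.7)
The plan is to argue by contradiction. Assume no vertex $u$ satisfies $(\tfrac14-\epsilon)n\le d_G(u)\le(\tfrac34+\epsilon)n-1$, so every vertex is either \emph{low} (degree strictly below $(\tfrac14-\epsilon)n$) or \emph{high} (degree strictly above $(\tfrac34+\epsilon)n-1$). Partition $V(G)=A\cup B$ accordingly and set $a=|A|$, $b=|B|=n-a$.

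The first and main step is to extract a linear-in-$a$ upper bound on $m$ from the low-degree constraint on $A$. Since every edge incident with $A$ contributes to some $d_G(v)$ with $v\in A$,
\[
e(A)+e(A,B)\le 2e(A)+e(A,B)=\sum_{v\in A}d_G(v)<a(\tfrac14-\epsilon)n,
\]
which gives $m=e(A)+e(A,B)+e(B)<a(\tfrac14-\epsilon)n+\binom{b}{2}$. Combining this with the hypothesis $m\ge(\tfrac12-\tfrac{\epsilon}{10})\binom{n}{2}$ and substituting $b=n-a$ produces a quadratic inequality in $a$. A direct check at $a=n/2$ shows that the upper and lower bounds on $m$ already disagree by $\Theta(\epsilon n^2)$; analysing the quadratic (using $\epsilon n\ge 10$ to absorb the $O(n)$ correction terms) then turns this into an explicit bound $a<\tfrac{n}{2}-c\epsilon n$ for an absolute constant $c>0$.

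The second step repeats this argument for the complement graph $\bar G$. Its size $\bar m=\binom{n}{2}-m$ satisfies the same density hypothesis, and a vertex $v$ is low-degree in $\bar G$ iff $v\in B$. Applying the first step to $\bar G$, with the roles of $A$ and $B$ swapped, yields the symmetric bound $b<\tfrac{n}{2}-c\epsilon n$. Adding the two bounds contradicts $a+b=n$, completing the proof.

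The main obstacle will be getting the right shape of the inequality in the first step. A naive count of $\sum_{v\in V(G)}d_G(v)=2m$ goes nowhere because the matching lower bound $\sum_{v\in A}d_G(v)\ge 0$ is useless; and the estimate $ab<n^2/4-\epsilon n^2$ that falls out of combining the $A$- and $B$-constraints symmetrically is too weak to keep $a$ away from $n/2$. The key observation is that the degree upper bound on $A$ controls \emph{all} edges touching $A$ at once via a bound linear in $a$, so bounding the remaining edges trivially by $\binom{b}{2}$ still leaves enough room for the hypothesis on $m$ to force the desired separation.
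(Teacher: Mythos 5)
Your proof is correct, and it takes a genuinely different and more elementary route than the paper. The paper argues by contradiction via an extremal modification: among all counterexamples it picks one maximizing $\sum_{u\in V_+}d_G(u)$, shows that the high-degree set must induce a clique and the low-degree set an independent set, introduces an auxiliary quantity $n'$ with $2m=\binom{n'}{2}$, and after several estimates derives the contradiction $\frac{n'}{2}<n_+<\frac{n'}{2}$. You instead use a single direct count: every edge meeting the low set $A$ is charged to a low-degree vertex, giving $m< a\left(\frac{1}{4}-\epsilon\right)n+\binom{b}{2}$, and comparing with $m\ge\left(\frac{1}{2}-\frac{\epsilon}{10}\right)\binom{n}{2}$ yields a convex quadratic constraint on $a$; since its value at $a=n/2$ is about $-\frac{9}{20}\epsilon n^2$ and its slope on $[0,n]$ is $O(n)$, one gets $a<\frac{n}{2}-\frac{9}{20}\epsilon n$ (the condition $\epsilon n\ge 10$ is indeed what rules out the second root of the quadratic exceeding being relevant, i.e.\ it forces $g(n)<0$), and complementation gives the same bound for $b$, contradicting $a+b=n$. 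Your approach avoids the edge-shifting step entirely (and with it the need to verify that the modified graph is still a counterexample) and produces explicit constants with less computation; the paper's approach yields more structural information about near-extremal configurations. Two small points to tidy up in a full write-up: reduce first to $\epsilon<\frac{1}{4}$ (as the paper does) so that $\frac{n}{2}-c\epsilon n$ stays positive and the claim is non-vacuous, and note that the strict inequality $\sum_{v\in A}d_G(v)<a\left(\frac{1}{4}-\epsilon\right)n$ degenerates to a non-strict one when $A=\emptyset$, which is harmless but should be mentioned.
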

\begin{proof}
Since the statement is trivial for $\epsilon\geq \frac{1}{4}$,
we may assume that $\epsilon<\frac{1}{4}$.
For a contradiction, 
suppose that $G$ is as in the hypothesis 
but that, for every vertex $u$ of $G$,
either $d_G(u)<\left(\frac{1}{4}-\epsilon\right)n$
or $d_G(u)>\left(\frac{3}{4}+\epsilon\right)n-1$.
Let 
$V_+=\{ u\in V(G):d_G(u)>\left(\frac{3}{4}+\epsilon\right)n-1\}$
and $V_-=V(G)\setminus V_+$.
Since $\left(\frac{3}{4}+\epsilon\right)n-1>\left(\frac{1}{4}-\epsilon\right)n$, we have
$V_-=\{ u\in V(G):d_G(u)<\left(\frac{1}{4}-\epsilon\right)n\}.$
We assume that among all counterexamples,
the graph $G$ is chosen such that 
$$\sum\limits_{u\in V_+}d_G(u)={n\choose 2}-\sum\limits_{u\in V_-}d_G(u)$$
is as large as possible.
Note that the desired statement as well as the choice of $G$
are symmetric with respect to forming the complement.

Let $n_+=|V_+|$. 
Clearly, $|V_-|=n-n_+$.
If $n_+>\frac{3}{4}n$, then 
$$
\frac{21}{40}n^2
>\frac{21}{20}{n\choose 2}
>\left(1+\frac{\epsilon}{5}\right){n\choose 2}
\geq 2m
\geq\sum\limits_{u\in V_+}d_G(u)
>\frac{3}{4}n\left(\left(\frac{3}{4}+\epsilon\right)n-1\right)
\stackrel{\epsilon n\geq 1}{\geq}\frac{9}{16}n^2,$$
which is a contradiction.
We obtain $n_+\leq \frac{3}{4}n$.
By symmetry with respect to forming the complement,
we also obtain $n-n_+\leq \frac{3}{4}n$,
and, hence, 
\begin{eqnarray}\label{e1}
\frac{1}{4}n\leq n_+\leq \frac{3}{4}n,
\end{eqnarray}
which implies, in particular, that every vertex in $V_+$ has a neighbor in $V_-$.

If $V_+$ contains two non-adjacent vertices $u$ and $v$, 
and $w$ is a neighbor of $u$ in $V_-$,
then $G'=G-uw+uv$ is a counterexample 
contradicting the choice of $G$.
Hence, 
$V_+$ is complete.
By symmetry with respect to forming the complement,
the choice of $G$ also implies that  
$V_-$ is independent.

Now, let $d_+$ be the average degree of the vertices in $V_+$,
and let $d_-$ be the average degree of the vertices in $V_-$.
Clearly, 
\begin{eqnarray}\label{e1b}
d_+>\left(\frac{3}{4}+\epsilon\right)n-1\,\,\,\,\,\,\,\mbox{ and }\,\,\,\,\,\,\,
d_-<\left(\frac{1}{4}-\epsilon\right)n.
\end{eqnarray}
Let the real $n'$ be such that 
$$2m={n'\choose 2}.$$
If $n'>\left(1+\frac{4}{3}\epsilon\right)n$, then 
\begin{eqnarray*}
\left|m-\frac{1}{2}{n\choose 2}\right|
&=&\left|\frac{1}{2}{n'\choose 2}-\frac{1}{2}{n\choose 2}\right|\\
&>&\frac{1}{2}{\left(1+\frac{4}{3}\epsilon\right)n\choose 2}-\frac{1}{2}{n\choose 2}\\
&=&\frac{\epsilon n(6n+4\epsilon n-3)}{9}\\
&\stackrel{\epsilon n\geq 1}{>}&\frac{6\epsilon n^2}{9}\\
&>&\frac{\epsilon}{10}{n\choose 2},
\end{eqnarray*}
which is a contradiction.
Conversely, 
if $n'<\left(1-\frac{4}{5}\epsilon\right)n$, then 
\begin{eqnarray*}
\left|m-\frac{1}{2}{n\choose 2}\right|
&=&\left|\frac{1}{2}{n'\choose 2}-\frac{1}{2}{n\choose 2}\right|\\
&>&\frac{1}{2}{n\choose 2}-\frac{1}{2}{\left(1-\frac{4}{5}\epsilon\right)n\choose 2}\\
&=&\frac{\epsilon n(10n-4\epsilon n-5)}{25}\\
&\stackrel{\epsilon n\geq 1}{>}&\frac{\epsilon (10-9\epsilon)n^2}{25}\\
&>&\frac{\epsilon}{10}{n\choose 2},
\end{eqnarray*}
which is a contradiction.
Hence,
\begin{eqnarray}\label{e2}
\left(1-\frac{4}{5}\epsilon\right)n\leq n'\leq \left(1+\frac{4}{3}\epsilon\right)n.
\end{eqnarray}
Note that 
\begin{eqnarray*}
\frac{n'-n_+}{n-n^+}
&\stackrel{(\ref{e1}),(\ref{e2})}{\geq} &\frac{\left(1-\frac{4}{5}\epsilon\right)n-\left(1-\frac{4}{5}\epsilon\right)n_+-\frac{4}{5}\epsilon n_+}{n-n^+}\\
&=&\left(1-\frac{4}{5}\epsilon\right)-\frac{4}{5}\epsilon\left(\frac{n_+}{n-n^+}\right)\\
&\stackrel{(\ref{e1})}{\geq} & 1-\frac{4}{5}\epsilon-\frac{4}{5}\epsilon\left(\frac{\frac{3}{4}n}{\frac{1}{4}n}\right)\\
&=&1-\frac{16}{5}\epsilon,
\end{eqnarray*}
which implies
\begin{eqnarray}
\frac{1}{2}n'(n'-n_+)
& \stackrel{(\ref{e2})}{\geq} & \frac{1}{2}\left(1-\frac{4}{5}\epsilon\right)n\left(1-\frac{16}{5}\epsilon\right)(n-n_+)\nonumber\\
&>&\frac{1}{2}\left(1-4\epsilon\right)n(n-n_+)\nonumber\\
&=&2\left(\frac{1}{4}-\epsilon\right)n(n-n_+).\label{e2b}
\end{eqnarray}
Since every vertex in $V_+$ has exactly $n_+-1$ neighbors in $V_+$,
and $V_-$ is independent,
the number of edges in $G$ between $V_+$ and $V_-$ equals
\begin{eqnarray}\label{e3}
d_+n_+-n_+(n_+-1)=d_-(n-n_+),
\end{eqnarray}
and the sum of all vertex degrees equals
\begin{eqnarray}\label{e4}
d_+n_++d_-(n-n_+)=2m={n'\choose 2}.
\end{eqnarray}
Adding (\ref{e3}) and (\ref{e4}) implies
$$\frac{1}{2}n'(n'-1)+n_+(n_+-1)=2d_+n_+
\stackrel{(\ref{e1b})}{>}
2\left(\left(\frac{3}{4}+\epsilon\right)n-1\right)n_+
\stackrel{(\ref{e2})}{\geq}2\left(\frac{3}{4}n'-1\right)n_+$$
or, equivalently,
$$\left(n_+-\frac{n'}{2}\right)\left(n_+-(n'-1)\right)>0.$$
Since 
$n_+
\stackrel{(\ref{e1})}{\leq}\frac{3}{4}n
\stackrel{\epsilon<\frac{1}{4},\epsilon n\geq 10}{<}\left(1-\frac{4}{5}\epsilon\right)n-1
\stackrel{(\ref{e2})}{\leq}n'-1,$
this implies 
$$n_+<\frac{n'}{2}.$$
Subtracting (\ref{e3}) from (\ref{e4}) implies
$$\frac{1}{2}n'(n'-1)-n_+(n_+-1)=2d_-(n-n_+)
\stackrel{(\ref{e1b})}{<}
2\left(\frac{1}{4}-\epsilon\right)n(n-n_+)
\stackrel{(\ref{e2b})}{\leq}\frac{1}{2}n'(n'-n_+)$$
or, equivalently,
$$\left(n_+-\frac{n'}{2}\right)\left(n_+-1\right)>0.$$
Since $\epsilon <\frac{1}{4}$, $\epsilon n\geq 10$, and (\ref{e1}), 
it follows easily that $n_+>1$, and, hence, 
$$n_+>\frac{n'}{2}.$$
The contradiction $\frac{n'}{2}<n_+<\frac{n'}{2}$ completes the proof.
\end{proof}
We are now in the position to complete the proof of Claim \ref{claim2}.
\begin{proof}[Proof of Claim \ref{claim2}]
If $k+1>\epsilon n$, 
then exactly the same arguments as in the proof of Claim \ref{claim1}
imply that 
$$|\mathbb{E}[i_1,\ldots,i_k,p]-\mathbb{E}[i_1,\ldots,i_k]|
\stackrel{(\ref{eclaim1r})}{\leq}\frac{8}{\epsilon}+4
\leq 
\left(\frac{1}{2}+327\epsilon\right)\Delta+\left(\frac{8}{\epsilon}+4\right),$$
regardless of the specific choice of $p$ from $[n]\setminus \{ i_1,\ldots,i_k\}$.

Hence, we may assume that $k+1\leq \epsilon n$.

We consider the auxiliary graph 
$$G=\left([n]\setminus [k],c^{-1}(1)\cap {[n]\setminus [k]\choose 2}\right).$$
Since $c$ is a zero-sum labeling, 
we have $|c^{-1}(1)|=\frac{1}{2}{n\choose 2}$.

Note that 
\begin{eqnarray}
\frac{n^2}{{n-k\choose 2}}
\leq
\frac{n^2}{{(1-\epsilon)n\choose 2}}
=
\frac{2n}{(1-\epsilon)((1-\epsilon)n-1)}
\stackrel{\epsilon<\frac{1}{4}}{\leq}
\frac{2n}{\frac{3}{4}\left(\frac{3}{4}n-1\right)}
\leq \frac{16}{3},\label{eclaim22}
\end{eqnarray}
where the last inequality assumes that $n$ is large enough to ensure 
$\frac{3}{4}n-1\geq \frac{n}{2}$.

Since the graph $G$ is obtained from the graph with vertex set $[n]$
and edge set $c^{-1}(1)$ by removing $k$ vertices, we obtain
\begin{eqnarray*}
\left|m(G)-\frac{1}{2}{n-k\choose 2}\right|
& \leq & 
\left|m(G)-\frac{1}{2}{n\choose 2}\right|
+
\left|\frac{1}{2}{n\choose 2}-\frac{1}{2}{n-k\choose 2}\right|\\
& \leq & \Big((n-1)+(n-2)+\cdots+(n-k)\Big)+
\underbrace{\left|\frac{1}{2}{n\choose 2}-\frac{1}{2}{(1-\epsilon)n\choose 2}\right|}_{=\frac{\epsilon n((2-\epsilon)n-1)}{4}\leq \epsilon n^2}\\
& \leq & kn+\epsilon n^2\\
& \leq & 2\epsilon n^2\\
& \stackrel{(\ref{eclaim22})}{\leq}&\frac{32\epsilon}{3}{n-k\choose 2}.
\end{eqnarray*}
Now, applying Claim \ref{claim4} 
with $\frac{320\epsilon}{3}$ instead of $\epsilon$,
implies the existence of a vertex $p$ of $G$
with 
$$\left(\frac{1}{4}-\frac{320\epsilon}{3}\right)(n-k)\leq d_G(p)\leq \left(\frac{3}{4}+\frac{320\epsilon}{3}\right)(n-k)-1.$$
Using the same notation as in the proof of Claim \ref{claim1},
this implies
\begin{eqnarray*}
|\bar{c}'_k(p)|&\leq &
\frac{1}{n-k-1}\left(
\left(\frac{3}{4}+\frac{320\epsilon}{3}\right)(n-k)-1
-\left(\frac{1}{4}-\frac{320\epsilon}{3}\right)(n-k)\right)\\
&\stackrel{k+1\leq \epsilon n}{\leq} &\frac{1}{(1-\epsilon)n}
\left(\frac{1}{2}+\frac{640\epsilon}{3}\right)n\\
&\stackrel{\epsilon<\frac{1}{4}}{\leq}&
(1+2\epsilon)\left(\frac{1}{2}+\frac{640\epsilon}{3}\right)\\
&\stackrel{\epsilon<\frac{1}{4}}{\leq}&
\frac{1}{2}+\frac{963\epsilon}{3}.
\end{eqnarray*}
Now, 
using the notation and some observations from the proof of Claim \ref{claim1},
we have 
\begin{eqnarray*}
&& |\mathbb{E}[i_1,\ldots,i_k,p]-\mathbb{E}[i_1,\ldots,i_k]|\\
&\stackrel{(\ref{eclaim11-1})}{=}& 
\left|c_1
+\sum\limits_{j=1}^k\Big(\bar{c}'_k(j)-\bar{c}_k(j)\Big)d_{k+1}(j)
+\sum\limits_{j=1}^k\bar{c}_k(j)\Big(d_{k+1}(j)-d_k(j)\Big)
+\bar{c}'_k(p)d_2
+(\bar{c}'_k-\bar{c}_k)m_k
-\bar{c}'_kd_2
\right|\\
& \leq & 
\underbrace{d_1}_{\leq 1}
+\underbrace{\frac{2}{n-k}\sum\limits_{j=1}^kd_{k+1}(j)}_{\leq \frac{2}{\frac{3}{4}n}n\leq \frac{8}{3}}
+\underbrace{\sum\limits_{j=1}^k|\bar{c}_k(j)|\Big|d_{k+1}(j)-d_k(j)\Big|}_{\leq 1}
+\underbrace{\Big(|\bar{c}'_k(p)|+|\bar{c}'_k|\Big)}_{\leq \frac{1}{2}+\frac{963\epsilon}{3}+\frac{16}{3}\epsilon\leq \frac{1}{2}+327\epsilon}
\underbrace{d_2}_{\leq \Delta}
+\underbrace{\frac{4}{n-k}m_k}_{\leq \frac{4}{\frac{3}{4}n}n\leq \frac{16}{3}}\\
& \leq & 
\left(\frac{1}{2}+327\epsilon\right)\Delta+\left(\frac{8}{\epsilon}+4\right),
\end{eqnarray*}
which completes the proof.
\end{proof}
As explained after the statement of Claim \ref{claim2},
this completes the proof of Theorem \ref{theorem1}

\section{Conclusion}

Unlike the proof of Proposition \ref{proposition2},
the proof of Theorem \ref{theorem1} uses that $F$ is a forest.
Nevertheless, it is not difficult to generalize the approach to $k$-degenerate graphs,
adapting, in particular, (\ref{ec1}) and (\ref{ec2})
as well as the estimates based thereupon.
Another possible and technically straightforward generalization 
concerns the situation in which $c$ is not zero-sum,
that is, $c(K)$ is not $0$.
In this case, our approach yields an isomorphic copy $F'$ of $F$
for which $\bar{c}(F')$ is close to $\bar{c}(K)$.
It seems possible to strengthen our approach, 
or rather the analysis of the considered greedy algorithm,
in order to get an approximate version of Conjecture \ref{conjecture2}
with $\frac{1}{2}\Delta+\frac{1}{2}$ 
replaced by $\left(\frac{1}{2}+\epsilon\right)\Delta+C_\epsilon$.
A key ingredient that needs to be improved for this 
seems to be Claim \ref{claim4}.

The transpositions considered in the proof of Proposition \ref{proposition2} 
suggest {\it local search} as another algorithmic strategy:
Let $K$ be a complete graph with vertex set $[n]$,
let $c:E(K)\to \{ -1,1\}$ be a zero-sum labeling of $K$, and 
let $H$ be a spanning subgraph of $K$.
For two distinct vertives $u$ and $v$ of $K$,
let $\delta_H(uv)$ be the set of edges of $H$ 
between $\{ u,v\}$
and $(N_H(u)\cup N_H(v))\setminus \{ u,v\}$,
and let $H_{uv}$ arise from $H$ by 
\begin{itemize}
\item removing all edges in $\delta_H(uv)$, and 
\item adding all possible edges between $u$ and $N_H(v)\setminus \{ u\}$
as well as between $v$ and $N_H(u)\setminus \{ v\}$,
\end{itemize}
that is, $H_{uv}$ is an isomorphic copy of $H$ in $K$ 
in which $u$ and $v$ exchanged their roles.

Now, we suppose that $c(H)>0$ but that 
$c(H)\leq c(H_{uv})$ for every edge $uv$ of $K$,
or, equivalently,
\begin{eqnarray}\label{elocal}
c(\delta_H(uv))\leq c(\delta_{H_{uv}}(uv))
\,\,\,\,\mbox{for every edge $uv$ of $K$,}
\end{eqnarray}
that is, no {\it local search step} replacing $H$ by $H_{uv}$ reduces $c(H)$.
Standard double-counting arguments imply
\begin{eqnarray*}
\sum\limits_{uv\in {[n]\choose 2}}c(\delta_H(uv)) & = & \sum\limits_{uv\in E(H)}(2n-4)c(uv),\\
\sum\limits_{uv\in {[n]\choose 2}}c(\delta_{H_{uv}}(uv))
&=& \sum\limits_{uv\in {[n]\choose 2}\setminus E(H)}(d_H(u)+d_H(v))c(uv)
+\sum\limits_{uv\in E(H)}(d_H(u)+d_H(v)-2)c(uv)\mbox{, and},
\end{eqnarray*}
summing (\ref{elocal}) over all edges $uv$ of $K$, we obtain
$$(2n-2)c(H)=\sum\limits_{uv\in E(H)}(2n-2)c(uv)\leq 
\sum\limits_{uv\in {[n]\choose 2}}(d_H(u)+d_H(v))c(uv).$$
The problem now is that the right hand side of this inequality is hard to work with.
However, if $H$ is $\Delta$-regular, then, since $c$ is zero-sum,
the right hand side evaluates to $0$,
which implies the contradiction $c(H)\leq 0$.
In other words, in the case that $H$ is $\Delta$-regular and $c(H)>0$,
there is at least one edge $uv$ of $K$ with $c(H_{uv})<c(H)$.
Therefore, 
since $|c(H_{uv})-c(H)|\leq 4\Delta$ for every edge $uv$ of $K$,
local search efficiently generates 
an isomorphic copy $H'$ of $H$ in $K$ with $|c(H')|\leq 2\Delta$.


\begin{thebibliography}{}
\bibitem{alsp} N. Alon and J.H. Spencer, The Probabilistic Method, Wiley Publishing, 4th edition, 2016.
\bibitem{ca} Y. Caro, Zero-sum problems - a survey, Discrete Mathematics 152 (1996) 93-113.
\bibitem{cahalaza} Y. Caro, A. Hansberg, J. Lauri, and C. Zarb, On zero-sum spanning trees and zero-sum connectivity, arXiv 2007.08240v1. 
\bibitem{cayu} Y. Caro and R. Yuster, On zero-sum and almost zero-sum subgraphs over $\mathbb{Z}$, Graphs and Combinatorics 32 (2016) 49-63.
\bibitem{ehmora} S. Ehard, E. Mohr, and D. Rautenbach, Low weight perfect matchings, to appear in The Electronic Journal of Combinatorics, arXiv:2010.15418v1.
\bibitem{ergizi} P. Erd\H{o}s, A. Ginzburg, and A. Ziv, Theorem in the additive number theory, Bulletin of the Research Council of Israel 10F (1961) 41-43.
\bibitem{erse} P. Erd\H{o}s and J.L. Selfridge, On a combinatorial game, Journal of Combinatorial Theory, Series A 14 (1973) 298-301.
\bibitem{gage} W. Gao and A. Geroldinger, Zero-sum problems in finite abelian groups: A survey, Expositiones Mathematicae 24 (2006) 337-369.
\bibitem{kisi} T. Kittipassorn and P. Sinsap, On the existence of zero-sum perfect matchings of complete graphs, arXiv:2011.00862v1.
\bibitem{mopara} E. Mohr, J. Pardey, and D. Rautenbach, Zero-sum copies of spanning forests in zero-sum complete graphs, arXiv:2101.11233v1.
\end{thebibliography}
\end{document}